\theoremstyle{plain}     % default. it sets the text in italic and adds extra space above and below the
\newtheorem{theorem}{Theorem}
\newtheorem{corollary}[theorem]{Corollary}
\newtheorem{lemma}[theorem]{Lemma}
\theoremstyle{definition} % adds extra space above and below, but sets the text in roman.
\theoremstyle{remark} % Font is set in roman, with no additional space above or below.
\newtheorem{remark}{Remark}
\renewenvironment{proof}[1][Proof]{\noindent\textbf{#1.} }{\ $\Box$\medskip}
\title{A Functional Equation of Tail-balance \\ for Continuous Signals in the
Condorcet \\ Jury Theorem}
\author{Steve Alpern\footnote{Warwick Business School, University of Warwick, Scarman Rd, Coventry CV4 7AL;
Steve.Alpern@wbs.ac.uk} \and Bo Chen\footnote{Warwick Business School, University of Warwick,
Scarman Rd, Coventry CV4 7AL; Bo.Chen@wbs.ac.uk} \and Adam J. Ostaszewski\footnote{Mathematics
Department, London School of Economics, Houghton Street, London WC2A 2AE;
A.J.Ostaszewski@lse.ac.uk} }
\date{November 22, 2019}
\begin{document}

\maketitle

\begin{abstract}
Consider an odd-sized jury, which determines a majority verdict between two equiprobable states
of Nature. If each juror independently receives a binary signal identifying the correct state
with identical probability $p$, then the probability of a correct verdict tends to one as the
jury size tends to infinity (Condorcet, 1785). Recently, \citet{AlpC2,AlpC1} developed a model
where jurors sequentially receive signals from an interval according to a distribution, which
depends on the state of Nature and on the juror's ``ability'', and vote sequentially. This paper
shows that to mimic Condorcet's binary signal, such a distribution must satisfy a functional
equation related to tail-balance, that is, to the ratio $\alpha(t)$ of the probability that a
mean-zero random variable satisfies $X$ $>t$ given that $|X|>t$. In particular, we show that
under natural symmetry assumptions the tail-balances $\alpha(t)$ uniquely determine the
distribution.
\end{abstract}

\section{Introduction}
\label{sec:Introduction}

This paper studies a functional equation arising from an extension of the celebrated Condorcet Jury
Theorem. In Condorcet's model, an odd-sized jury must decide whether Nature is in one of two
equiprobable states of Nature, $ A $ or $B$. Each juror receives an independent binary signal (for
$A$ or for $B $) which is correct with the same probability $p>1/2$. \citet{Con} [or see
\citet[][Ch.~XVII]{Tod} for a textbook discussion] showed that when jurors vote simultaneously
according to their signal, the probability of a correct verdict tends to $1$ as the number of
jurors tends to infinity. Recently, \citet{AlpC2,AlpC1} considered a related sequential voting
model, where jurors receive signals $S$ in the interval $[-1, +1] $ rather than binary signals. Low
signals indicate $B$ and high signals indicate $A$. The strength of this ``indication'' depends on
the ``ability'' of the juror, a number $a$ between $0$ and $1$, which is a proxy for Condorcet's
$p$ that tends from $1/2$ to $1$. When deciding how to vote, each juror notes the previous voting,
the abilities of the previous jurors, his own signal $S$ and his own ability. This is sufficient to
determine which alternative he views as being more likely. The mechanism that underlies this
determination is the common knowledge of the distributions by which a signal is given as private
information to each juror, depending on his ability and the state of Nature. It is not relevant to
the discussions of this paper, but we mention that one of the main results of Alpern and Chen is
that, given three jurors of fixed abilities, their majority verdict is most likely to be correct
when they vote in the following order: middle-ability juror first, highest-ability juror next, and
finally the lowest-ability juror.

The cumulative distribution formula of signals $S$ on $\left[ -1,+1\right] $ that a juror of
ability $a$ receives in the Alpern-Chen jury model is given by
\begin{align}
F_{a}(t) := \mathbb{P}_{a}[S\leq t \,|\,  A] =(t+1)(at-a+2)/4, & \text{ if Nature is $A$;}\label{Fa} \\
G_{a}(t) := \mathbb{P}_{a}[S\leq t \,|\,  B]= ( t+1)( a-at+2)/4, &\text{ if Nature is $B$.}  \label{Ga}
\end{align}
These were selected as the simplest family of distributions arising from linear densities in which
steepness indicates ability: for the signal distributions following $A$ or $B$, a high signal in
$[-1,+1]$ was to indicate $A$ as more likely, and a low signal to indicate $B$ as more likely. For
this reason an increasing density function was selected to follow $A$ and a decreasing one to
follow $B$. The simplest increasing and decreasing densities on $[-1,+1]$ are the linear functions
(taken to mean ``affine'') that go through $(0,1/2)$. This gave the signal distribution of
Alpern-Chen model. It emerges from results below that these are uniquely determined from requiring
the tail-balance to be linear.

The relation between the two distributions is based on the following assumption of signal symmetry:
\[
\mathbb{P}_{a}[S\leq t\,|\, A]=\mathbb{P}_{a}[-S\leq t\,|\, B].
\]
For a continuous distribution it follows that
\begin{align*}
G_{a}(t) &=\mathbb{P}_{a}[S\leq t\,|\,  B]=\mathbb{P}_{a}[-S\geq -t \,|\,  B]
 =1-\mathbb{P}_{a}[-S\leq -t \,|\,  B] \\
        &=1-\mathbb{P}_{a}[S\leq -t \,|\,  A]=1-F_{a}(-t),
\end{align*}
and so
\[%begin{equation}
G_{a}(t) =1-F_{a}(-t).  %\label{G(s)=1-F(-s)}
\]%end{equation}
This confirms that, for a juror of any ability $a$, the probability of receiving a signal less than
$t$ when Nature is $B$ is the same as receiving a signal larger than $t$ when Nature is $A$.

The jurors use their private information (signal) $S$ in $[0,1]$ to calculate the conditional
probabilities of $A$ and $B$ by considering the relative likelihood that their signal came from the
distribution $F$ or the distribution $G$.

We now wish to relate this continuous signal model to the binary model of Condorcet. We want our
notion of ability $a$ to be a proxy for Condorcet's probability $p$. Since his $p$ runs from
$p_0=1/2$ (a useless signal) to $p_1=1$ (a certain signal) and our ability $a$ runs from $0$ (no
ability, useless signals) to $1$ (highest ability), for fixed $t$ the conditional probability is a
linear transformation, and so for some coefficient $b=b(t)$ we wish to have
\[
\mathbb{P}_{a}[A\,|\, S\geq t] =\frac{1}{2}+b(t)\, a,
\]
as the left-hand side is his $p$ if his signals are restricted to $-1$ (for $B$) and $+1$ (for
$A$), taking any $t$ in $( 0,1)$. We want this conditional probability to be $1$ when $t=+1$
(highest signal) and $a=1$ (highest ability), so this gives $b(1) =1/2$. When $t=-1$ the condition
$S\geq t$ gives no new information for any $a$, so the left-hand side should be the \emph{a priori}
probability of $A$, $\mathbb{P}_a[A]$, which is $1/2$. Hence $b(-1) =0$ and, taking $b$ to be
linear, we get $b(t) =(t+1)/4$, or
\[
\mathbb{P}_{a}[A\,|\, S\geq t] =\frac{1}{2}+\frac{t+1}{4}\,a.
\]
Putting $H_{a}( t) :=\mathbb{P}_{a}[ S\leq t\,|\, A] $, so that $\mathbb{P}_{a}[S\leq t\,|\,
B]=1-H_{a}(-t)$ as above, by Bayes Law and since $\mathbb{P}[B] =\mathbb{P}[A] $, we have
\begin{eqnarray*}
\mathbb{P}_{a}[ A\,|\, S\geq t]
&=&\frac{\mathbb{P}_{a}[ S\geq t\,|\, A] \mathbb{P}[ A] }{\mathbb{P}_{a}[ S\geq t\,|\, A]
\mathbb{P} [A] +\mathbb{P}_{a}[ S\geq t\,|\, B] \mathbb{P}[ B] } \\
&=&\frac{1-H_{a}( t) }{( 1-H_{a}( t) )+( 1-( 1-H_{a}( -t) ) ) } \\
&=&\frac{1-H_{a}( t) }{1-H_{a}( t) +H_{a}(-t) }.
\end{eqnarray*}%
The last term, comparing the right tail against the tail sum, is known as the \textit{tail-balance
ratio} \citep[\S8.3]{BinGT}. Its asymptotic behaviour and the regular variation of the tail-sum are
particularly relevant to the Domains of Attraction Theorem of probability theory
\citep[Theorem~8.3.1]{BinGT}. The theorem is concerned with stable laws (stable under addition: the
sum of two independent random variables with that law has, to within scale and centering, the same
law), and identifies those that arise as limits in distribution of appropriately scaled and
centered random walks.

In summary we seek a family, indexed by the ability $a$, of signal distributions $H_{a}\left(
t\right) $ on $\left[ -1,+1\right] $, which correspond to state of Nature $A$ while $1-H_{a}\left(
-t\right) $ correspond to state of Nature $B$, such that by Bayes Law
\[
\mathbb{P}_{a}[ A\,|\, S\geq t] =\frac{1-H_{a}(t)}{1-H_{a}\left( t\right)
+H_{a}(-t) }=\frac{1}{2}+\frac{t+1}{4}~a.
\]
The main technical result of the paper is the following.

\begin{lemma}\label{lem:main}
The unique solution for the c.d.f.\ $H_{a}( t)$ on $[ -1,+1 ] $ to the following functional
equations for $0\leq a\leq 1$:
\[
\frac{1-H_{a}( t) }{1-H_{a}( t) +H_{a}( -t)}=\frac{1}{2}+\frac{t+1}{4}\,a,\ -1\leq t\leq +1
\]
is given by
\[
H_{a}\left( t\right) =F_{a}( t) =( t+1) (at-a+2) /4.
\]
\end{lemma}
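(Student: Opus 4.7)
The plan is to convert the functional equation, which couples the values of $H_a$ at $t$ and at $-t$, into a pair of linear equations by evaluating it at both $t$ and $-t$, then to solve that $2\times 2$ system for the unknowns $H_a(t)$ and $H_a(-t)$. Concretely, let $R(t):=\tfrac12+\tfrac{t+1}{4}a=\bigl(2+(t+1)a\bigr)/4$ and rewrite the given equation as
\[
1-H_a(t) \;=\; R(t)\,\bigl[\,1-H_a(t)+H_a(-t)\,\bigr].
\]
Substituting $-t$ for $t$ gives a second such identity. Denoting $D:=1-H_a(t)+H_a(-t)$, one has $1-H_a(-t)+H_a(t)=2-D$, so the two equations become
$1-H_a(t)=R(t)\,D$ and $1-H_a(-t)=R(-t)(2-D)$.

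Subtracting these two relations eliminates $H_a(t)+H_a(-t)$ in a useful way and yields a single linear equation for $D$. Using $R(t)+R(-t)=1+a/2$ and $R(-t)=\bigl(2+(1-t)a\bigr)/4$, this equation collapses, for $a>0$, to the strikingly clean identity
\[
D \;=\; 1-H_a(t)+H_a(-t) \;=\; 1-t.
\]
Feeding $D=1-t$ back into $1-H_a(t)=R(t)D$ gives $1-H_a(t)=(1-t)\bigl(2+(1+t)a\bigr)/4$, and a short expansion rearranges this to exactly $H_a(t)=(t+1)(at-a+2)/4=F_a(t)$, as required. The only real ``trick'' is the observation that the two evaluations, taken together, uniquely fix the symmetric combination $D$; once this is seen, the rest is bookkeeping.

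Two small loose ends remain. First, one should verify directly that this $F_a$ is a bona fide c.d.f.\ on $[-1,+1]$ (values $0$ and $1$ at the endpoints, monotone nondecreasing for $a\in[0,1]$) and satisfies the original functional equation; the computation is immediate from the factorisations $1-F_a(t)=(1-t)(at+a+2)/4$ and $F_a(-t)=(1-t)(2-a-at)/4$, whose sum is exactly $1-t$ in accord with the identity for $D$. Second, the argument above divides by $a$ when solving for $D$, so the degenerate case $a=0$ is not covered by the $2\times 2$ system: there the functional equation reduces to the single condition $H_0(t)+H_0(-t)=1$, which $F_0(t)=(t+1)/2$ satisfies but does not by itself determine. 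The natural way to handle this is to note that uniqueness has been established for every $a>0$ and to take $F_0$ as the continuous limit; this is the only place where care is needed, and it is more a matter of interpretation than of genuine obstacle.
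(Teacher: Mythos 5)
Your proof is correct and takes essentially the same route as the paper: both the paper's main argument (Theorem~\ref{thm:tail-solution}) and its alternative direct proof in Section~3 evaluate the equation at $t$ and $-t$ and solve the resulting $2\times 2$ linear system, and your identity $D=1-t$ is exactly the paper's $f_a(t)=t$ in the notation $f_a(t)=H_a(t)-H_a(-t)$. Your observation that the division by $a$ breaks uniqueness at $a=0$ (where the equation only forces $H_0(t)+H_0(-t)=1$) is a genuine point that the paper's own proofs silently gloss over.
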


The standard text-book treatment of functional equations is \citet{AczD}, but it is often the case
that particular functional equations arising in applications require individual treatment ---
recent such examples are \citet{ElsBFN} and \citet{KahM}; for applications in probability, see
\citet{Ost}.

We will prove this result in Section~\ref{sec:2}, which thus gives the following consequence for
the signal distribution in the jury problem.

\begin{theorem}
The only c.d.f.\ on the signal space $[-1,+1] $ that makes the conditional probability
$\mathbb{P}_{a}[A\,|\, S\geq t] $ a linear function of the juror's ability $a$ with a slope linear
in $t$ are the Alpern-Chen functions $F_{a}(t)$ and $G_{a}(t)$ given in (\ref{Fa}) and (\ref{Ga}).
\end{theorem}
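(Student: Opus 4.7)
The plan is to reduce the theorem directly to Lemma~\ref{lem:main} by pinning down the specific linear form of the posterior. Writing the hypothesis as
\[
\mathbb{P}_a[A \mid S \geq t] = c(t) + b(t)\, a,
\]
with $b$ affine in $t$, the first step is to determine $c(t)$. Setting $a = 0$ corresponds to a juror with no ability, for whom the conditional signal distributions under $A$ and $B$ must coincide; conditioning on any event involving $S$ therefore returns the prior, giving $\mathbb{P}_0[A \mid S \geq t] = \mathbb{P}[A] = 1/2$ for every $t$, and hence $c(t) \equiv 1/2$.

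Next I would determine the affine function $b(t)$ using the two boundary conditions already identified in the introduction. The event $\{S \geq -1\}$ has probability $1$ on the signal space, so conditioning on it returns the prior: $\mathbb{P}_a[A \mid S \geq -1] = 1/2$ for every $a$, yielding $b(-1) = 0$. At the opposite extreme, the largest signal $t = 1$ combined with the maximum ability $a = 1$ should identify state $A$ with certainty, so $\mathbb{P}_1[A \mid S \geq 1] = 1$, yielding $b(1) = 1/2$. Since $b$ is affine, these two values pin it down uniquely as $b(t) = (t+1)/4$.

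Substituting these choices, the hypothesis of the theorem becomes exactly the functional equation appearing in Lemma~\ref{lem:main}, so $H_a(t) = F_a(t) = (t+1)(at - a + 2)/4$ is forced. The companion distribution for state $B$ follows immediately from the signal-symmetry relation derived earlier, giving $G_a(t) = 1 - F_a(-t) = (t+1)(a - at + 2)/4$, which matches~(\ref{Ga}). The only subtlety, as I see it, is justifying that the informally stated hypothesis ``linear in $a$ with slope linear in $t$'' legitimately forces both $c(t) \equiv 1/2$ and the two numerical boundary values of $b$; once these normalisations are established, the remainder of the argument is a one-line appeal to Lemma~\ref{lem:main}.
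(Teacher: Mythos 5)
Your proposal is correct and follows essentially the same route as the paper: the paper derives the normalisations $c(t)\equiv 1/2$, $b(-1)=0$, $b(1)=1/2$ (hence $b(t)=(t+1)/4$) in the Introduction from the same boundary considerations, and then presents the theorem as an immediate consequence of Lemma~\ref{lem:main}, with $G_a$ recovered from the signal-symmetry relation exactly as you do. Your justification of the intercept via the ability-zero juror is a slightly cleaner phrasing of the paper's ``$a$ is a proxy for $p$ running from $1/2$ to $1$'' argument, but it is the same normalisation.
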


\section{Tail-balance equation and proof of Lemma~\ref{lem:main}}
\label{sec:2}

The proof of Lemma~\ref{lem:main} in this section is deduced from a more general result concerning
a functional equation of the following type:
\begin{equation}\label{eqn:tail-equation}
  \frac{1-H(t)}{1-H(t)+H(-t)}=\alpha (t),\ t\in [-1,+1),
\end{equation}
with $\alpha (t)$ a strictly monotone function, interpreting the left-hand side to be $1$ for
$t=1$. Of interest here are non-negative increasing functions $H$ with $H(-1)=0$ and $H(1)=1$
representing probability distribution functions, hence the adoption of the name ``tail balance''
(as above). Indeed, with these boundary conditions,
\[
\alpha(-1)=\frac{1}{2},
\]
implying that the left and right tails of $H$ are exactly balanced; furthermore,
\[
\frac{1}{2}<\alpha (t)<1,\ t\in (-1,+1),
\]
since
\[
\frac{1-H(t)}{1-H(t)+H(-t)}=1-\frac{H(-t)}{1-H(t)+H(-t)}.
\]
Note that $H(0)=1-\alpha (0)$. The linear case of the last section is thus
\[
\alpha(t)=\frac{1}{2}+\frac{t+1}{4}\,a,\ 0\le a\le 1.
\]
Turning to a general increasing $\alpha $, we may write
\[
\beta (t):=\frac{\alpha (t)}{1-\alpha (t)},\ t\in [-1,+1).
\]
This is again an increasing function with $\beta (t)>1$ for $t\in (-1,+1)$ and $\lim_{t\uparrow
1}\beta(t)=+\infty$ if $\alpha(1)=1$. Thus in the formula below $ \beta(t)\beta(-t)-1>0$ and
$\lim_{t\uparrow 1}(\beta (t)-1)/(\beta (t)\beta (-t)-1)=1$, since $\beta (-1)=1$.

\begin{theorem}\label{thm:tail-solution}
The tail-balance functional equation \eqref{eqn:tail-equation} has the following unique
non-negative solution:
\begin{equation}
H(t)=\frac{\beta (t)-1}{\beta (t)\beta (-t)-1}=\frac{(2\alpha
(t)-1)(1-\alpha (-t))}{\alpha (t)+\alpha (-t)-1}<1.  \label{Soln Tail}
\end{equation}
In particular, for $\alpha $ linear as in Lemma~\ref{lem:main}, we have
\begin{equation}
H(t)=\frac{( t+1) ( at-a+2)}{4}.  \label{Soln Lin}
\end{equation}
\end{theorem}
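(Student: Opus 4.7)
The plan is to recognise that, although \eqref{eqn:tail-equation} appears nonlinear, it reduces to a $2\times 2$ linear system in the unknown pair $(H(t),H(-t))$ once it is evaluated at both $t$ and $-t$.

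First I would clear the denominator in \eqref{eqn:tail-equation} and rearrange to get $(1-\alpha(t))(1-H(t))=\alpha(t)H(-t)$, which, on dividing by $1-\alpha(t)$, simplifies to
\[
1-H(t)=\beta(t)\,H(-t),\qquad t\in[-1,+1).
\]
Replacing $t$ by $-t$ yields the companion relation $1-H(-t)=\beta(-t)\,H(t)$, valid for $t\in(-1,+1]$. These two equations form a linear system in $H(t)$ and $H(-t)$ with coefficient determinant $1-\beta(t)\beta(-t)$; by the observation already recorded in the paragraph preceding the theorem that $\beta(t)\beta(-t)-1>0$ on $(-1,+1)$, the system is non-singular, so it has a unique solution. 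Solving by Cramer's rule (or by direct elimination) immediately gives
\[
H(t)=\frac{\beta(t)-1}{\beta(t)\beta(-t)-1},
\]
which is the first expression in \eqref{Soln Tail}; the equivalent form in terms of $\alpha$ follows on writing $\beta=\alpha/(1-\alpha)$ and multiplying numerator and denominator by $(1-\alpha(t))(1-\alpha(-t))$.

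Next I would verify, by direct substitution back into \eqref{eqn:tail-equation}, that this $H$ really solves the equation, and then check the c.d.f.\ boundary conditions $H(-1)=0$ and $H(1)=1$. I expect this boundary check to be the step demanding most care, since $\beta(-1)=1$ makes the numerator of \eqref{Soln Tail} vanish at $t=-1$, and $\beta(1)=+\infty$ (from $\alpha(1)=1$) produces an indeterminate form at $t=+1$; both are resolved by using the limits recorded in the excerpt, in particular $\lim_{t\uparrow 1}(\beta(t)-1)/(\beta(t)\beta(-t)-1)=1$ and the symmetric computation at $t=-1$.

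Finally, the second assertion \eqref{Soln Lin} for the linear case $\alpha(t)=\tfrac{1}{2}+\tfrac{t+1}{4}a$ of Lemma~\ref{lem:main} is a short substitution: computing $2\alpha(t)-1=a(t+1)/2$, $1-\alpha(-t)=(2-a+at)/4$ and $\alpha(t)+\alpha(-t)-1=a/2$, and inserting these into the second expression in \eqref{Soln Tail}, delivers $H(t)=(t+1)(at-a+2)/4=F_a(t)$, recovering the Alpern--Chen distribution.
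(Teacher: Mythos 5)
Your proposal is correct and follows essentially the same route as the paper: both rearrange \eqref{eqn:tail-equation} into the linear relation $1-H(t)=\beta(t)H(-t)$, pair it with its reflection under $t\mapsto -t$, and eliminate $H(-t)$ to obtain $(\beta(t)\beta(-t)-1)H(t)=\beta(t)-1$; your framing as a $2\times 2$ system with nonvanishing determinant just makes the uniqueness slightly more explicit. The linear-case substitution also matches the paper's computation.
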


\begin{proof}
After some re-arrangement of \eqref{eqn:tail-equation} we have
\begin{equation*}
H(-t)=\frac{1-H(t)}{\beta (t)}.
\end{equation*}%
So%
\begin{equation*}
\beta (-t)H(t)=1-H(-t)=1-\frac{1-H(t)}{\beta (t)}.
\end{equation*}%
Hence%
\begin{equation*}
(\beta(t)\beta (-t)-1)H(t)=\beta (t)-1,
\end{equation*}%
yielding the asserted formula. As for the inequality, we note the equivalence:
\begin{equation*}
\frac{\beta (t)-1}{\beta (t)\beta (-t)-1}<1\Longleftrightarrow \beta
(t)<\beta (t)\beta (-t).
\end{equation*}%
The calculation of the linear case of $\alpha $ is straightforward, and relies on
\begin{equation*}
\alpha (t)+\alpha (-t)-1=a/2,\quad \text{and}\quad 1-\alpha (-t)=(at-a+2)/4.
\end{equation*}
Our theorem follows.
\end{proof}

\begin{remark}
More generally, with $\alpha (t)$ monotone as before and $\mathbb{P}_a[A] =\theta $ with $0<\theta
<1$, so that $\mathbb{P}_a[ B] =1-\theta $, writing the odds $(1-\theta )/\theta $ as $\lambda ,$
the earlier application of Bayes Rule gives for $t\in [-1,+1]$
\begin{equation}\label{Odds}
\mathbb{P}_{a}[ A\,|\, S\geq t] =\frac{1-H_{a}\left( t\right) }{1-H_{a}\left( t\right)
+\lambda H_{a}\left( -t\right) }=\alpha (t).
\end{equation}
Here, as $1+\lambda =1+(1-\theta )/\theta =1/\theta$,
\begin{equation*}
\alpha (-1)=\frac{1}{1+\lambda }=\theta =\mathbb{P}_{a}[ A]
=\mathbb{P}_{a}[ A\,|\, S\geq -1] .
\end{equation*}
\end{remark}

%\medskip
To solve (\ref{Odds}) we apply a similar procedure as in Theorem~\ref{thm:tail-solution} by first
showing the following variant.

\begin{theorem}
The equation for $t\in [-1,+1]$
\begin{equation}
H(-t)=\gamma (t)+\delta (t)H(t)  \label{General}
\end{equation}%
has solution
\begin{equation}
H(t)=\frac{\gamma (t)\delta (-t)+\gamma (-t)}{1-\delta (t)\delta (-t)},
\label{Soln Gen}
\end{equation}%
provided $\delta (t)\delta (-t)\neq 1$.
\end{theorem}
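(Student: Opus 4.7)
The plan is to mimic the technique used in the proof of Theorem~\ref{thm:tail-solution}: treat the equation as one of a pair of linear relations in the two unknowns $H(t)$ and $H(-t)$, with the second relation obtained by the $t\mapsto -t$ symmetry of the domain $[-1,+1]$. The hypothesis $\delta(t)\delta(-t)\ne 1$ will be exactly the non-degeneracy condition that makes this $2\times 2$ linear system solvable.

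Concretely, I would first write down equation \eqref{General} as stated,
\[
H(-t)=\gamma(t)+\delta(t)H(t),
\]
and then immediately write down its reflected form, obtained by replacing $t$ with $-t$ (permissible since $t\in[-1,+1]$ iff $-t\in[-1,+1]$),
\[
H(t)=\gamma(-t)+\delta(-t)H(-t).
\]
Next, I would substitute the first expression for $H(-t)$ into the right-hand side of the second, obtaining
\[
H(t)=\gamma(-t)+\delta(-t)\bigl[\gamma(t)+\delta(t)H(t)\bigr]=\gamma(-t)+\delta(-t)\gamma(t)+\delta(-t)\delta(t)H(t).
\]
Rearranging to collect the $H(t)$ terms gives
\[
\bigl(1-\delta(t)\delta(-t)\bigr)H(t)=\gamma(-t)+\delta(-t)\gamma(t),
\]
and dividing by $1-\delta(t)\delta(-t)$ (valid under the stated hypothesis) yields the claimed formula \eqref{Soln Gen}.

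Finally, I would verify uniqueness: since the derivation consists of reversible algebraic steps (the substitution uses the given equation at $-t$, which is logically equivalent to using it at $t$), the expression displayed is forced whenever $\delta(t)\delta(-t)\ne 1$. I expect no genuine obstacle here; the only thing to be careful about is that the reflection step requires the functional equation to hold on a symmetric interval, which is exactly the hypothesis $t\in[-1,+1]$. No appeal to monotonicity, continuity, or boundary values of $H$ is needed for this abstract version; the tail-balance specialisation in Theorem~\ref{thm:tail-solution} corresponds to $\gamma(t)=1/\beta(t)$ and $\delta(t)=-1/\beta(t)$, and the formula reduces accordingly.
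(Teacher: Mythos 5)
Your proposal is correct and follows essentially the same route as the paper: reflect the equation via $t\mapsto -t$, substitute to eliminate $H(-t)$, collect the $H(t)$ terms, and divide by $1-\delta(t)\delta(-t)$. The additional remarks on uniqueness and on the specialisation $\gamma(t)=1/\beta(t)$, $\delta(t)=-1/\beta(t)$ are accurate but not needed.
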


\begin{proof}
Equation (\ref{General}) may be solved by writing
\begin{eqnarray*}
H(t) &=&\gamma (-t)+\delta (-t)H(-t) \\
&=&\gamma (-t)+\delta (-t)(\gamma (t)+\delta (t)H(t)), \\
H(t)(1-\delta (t)\delta (-t)) &=&\gamma (t)\delta (-t)+\gamma (-t),
\end{eqnarray*}%
yielding the claim.
\end{proof}

\begin{corollary}\label{cor:solution}
Equation (\ref{Odds}) has solution for $t\in [-1,+1]$
\begin{equation*}
H(t)=\frac{((\lambda +1)\alpha (t)-1)(1-\alpha (-t))}{\alpha (t)+\alpha
(-t)+(\lambda ^{2}-1)\alpha (t)\alpha (-t)-1}.
\end{equation*}
\end{corollary}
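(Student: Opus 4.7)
The plan is to recast equation~(\ref{Odds}) into the form (\ref{General}) and then invoke formula (\ref{Soln Gen}) directly. Starting from
\[
\alpha(t)\bigl(1-H(t)+\lambda H(-t)\bigr) = 1-H(t),
\]
I would isolate $H(-t)$, collecting the coefficients of $H(t)$ on the right-hand side. A short rearrangement gives
\[
\lambda\alpha(t)H(-t) = (1-\alpha(t))\bigl(1-H(t)\bigr),
\]
so
\[
H(-t) = \frac{1-\alpha(t)}{\lambda\alpha(t)} - \frac{1-\alpha(t)}{\lambda\alpha(t)}\,H(t),
\]
which is exactly of the shape (\ref{General}) with
\[
\gamma(t) = \frac{1-\alpha(t)}{\lambda\alpha(t)}, \qquad \delta(t) = -\gamma(t).
\]

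Next I would substitute these two expressions into the solution formula (\ref{Soln Gen}) from the preceding theorem. Since $\delta(t)=-\gamma(t)$, the numerator becomes
\[
\gamma(t)\delta(-t) + \gamma(-t) = \gamma(-t)\bigl(1-\gamma(t)\bigr)
 = \frac{1-\alpha(-t)}{\lambda\alpha(-t)}\cdot\frac{(\lambda+1)\alpha(t)-1}{\lambda\alpha(t)},
\]
and the denominator becomes
\[
1 - \delta(t)\delta(-t) = 1 - \gamma(t)\gamma(-t)
 = \frac{\lambda^{2}\alpha(t)\alpha(-t) - (1-\alpha(t))(1-\alpha(-t))}{\lambda^{2}\alpha(t)\alpha(-t)}.
\]

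Finally I would simplify. The common factor $\lambda^{2}\alpha(t)\alpha(-t)$ cancels between numerator and denominator, and expanding $(1-\alpha(t))(1-\alpha(-t))$ in the denominator produces
\[
\alpha(t)+\alpha(-t)-1+(\lambda^{2}-1)\alpha(t)\alpha(-t),
\]
yielding the claimed closed form. The only genuine obstacle is bookkeeping in this last algebraic step — ensuring that the $\lambda$- and $\alpha$-factors cancel exactly so that the numerator becomes $((\lambda+1)\alpha(t)-1)(1-\alpha(-t))$ and no spurious $\lambda$ survives in the denominator. The hypothesis $\delta(t)\delta(-t)\neq 1$ required by the previous theorem translates here into $\gamma(t)\gamma(-t)\neq 1$, equivalent to the non-vanishing of $\alpha(t)+\alpha(-t)-1+(\lambda^{2}-1)\alpha(t)\alpha(-t)$, which is the very denominator of the solution, so the corollary reads as a formal consequence of Theorem~6.
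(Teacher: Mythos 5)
Your proposal is correct and follows essentially the same route as the paper: rewrite (\ref{Odds}) in the form (\ref{General}) with $\gamma(t)=(1-\alpha(t))/(\lambda\alpha(t))$ and $\delta(t)=-\gamma(t)$, apply (\ref{Soln Gen}), and simplify by cancelling the common factor $\lambda^{2}\alpha(t)\alpha(-t)$. The algebra checks out, and your closing observation that the nondegeneracy condition $\delta(t)\delta(-t)\neq 1$ is exactly the non-vanishing of the final denominator matches the remark the paper makes (via the bound $\tfrac{1-\alpha(t)}{\alpha(t)}\cdot\tfrac{1-\alpha(-t)}{\alpha(-t)}<\lambda^{2}$) at the end of its proof.
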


\begin{proof}
Equation (\ref{Odds}) may be rewritten as
\[
(1-\alpha (t))=(1-\alpha (t))H(t)+\lambda \alpha (t)H(-t),\ t\in [-1,+1],
\]
so is of the more general form above with
\begin{equation}
\gamma (t)=\frac{1-\alpha (t)}{\lambda \alpha (t)},\quad \delta (t)=-\gamma
(t).  \label{gamma}
\end{equation}%
For $\gamma (t)=-\delta (t)$ equation (\ref{Soln Gen}) becomes%
\begin{equation}
H(t)=\frac{-\gamma (t)\gamma (-t)+\gamma (-t)}{1-\gamma (t)\gamma (-t)}
=\frac{\gamma (-t)(1-\gamma (t))}{1-\gamma (t)\gamma (-t)}.  \label{Soln Spec}
\end{equation}%
Substitution in (\ref{Soln Spec}) for $\gamma (t)$ from (\ref{gamma}) gives
\begin{align*}
H(t) &=\frac{1-\alpha (-t)}{\lambda \alpha (-t)}\frac{1-\frac{1-\alpha (t)}{\lambda \alpha (t)}}
{1-\frac{1-\alpha (t)}{\lambda \alpha (t)}\frac{1-\alpha(-t)}{\lambda \alpha (-t)}} \\
 &=\frac{1-\alpha (-t)}{\alpha (-t)}\cdot \frac{\lambda \alpha (t)\alpha(-t)-(1-\alpha (t))
  \alpha (-t)}{(\lambda ^{2}\alpha (t)\alpha(-t))-(1-\alpha (t))(1-\alpha (-t))}   \\
 &=\frac{((\lambda +1)\alpha (t)-1((1-\alpha (-t))}{(\lambda ^{2}\alpha(t)\alpha (-t))
  -(1-\alpha (t))(1-\alpha (-t))}   \\
 &=\frac{((\lambda +1)\alpha (t)-1)(1-\alpha (-t))}{\alpha (t)+\alpha(-t)
  +(\lambda ^{2}-1)\alpha (t)\alpha (-t)-1}.  %\label{Soln alpha}
\end{align*}%
Note that, as $\alpha $ is increasing, $\alpha (t)>\theta $ for $t\in (-1,+1] $ and then $(1-\alpha
(t))/\alpha (t)<(1-\theta )/\theta =\lambda$. So, for $t\in \lbrack -1,+1]$, the denominator in
(\ref{Soln Tail}) is non-zero, as
\begin{equation*}
\frac{1-\alpha (t)}{\alpha (t)}\cdot \frac{1-\alpha (-t)}{\alpha (-t)}<\lambda
^{2}.
\end{equation*}
\end{proof}

\begin{remark}
When $\lambda =1$ the above corollary yields (\ref{Soln Tail}) of Theorem~\ref{thm:tail-solution}.
\end{remark}

\begin{theorem}
The linear case of the general odds tail-balance equation (\ref{Odds}), i.e., with
\[
\alpha (t)=\theta +(t+1)(1-\theta )a/2,
\]
has solution
\begin{align}
H(t) &=\frac{(1+t)(at-a+2)a/4}{-a^{2}/4+a^{2}t^{2}/4+(a+\lambda a^{2}/4
        -\lambda a^{2}t^{2}/4)}  \label{Soln Odds} \\
    & =\left\{\begin{array}{ll}
        (1+t)(at-a+2)/(4-a+at^{2}), & \textrm{as }\lambda \rightarrow 0, \\
        (1+t)(at-a+2)a/4, & \textrm{if } \lambda =1, \\
        (at-a+2)/(\lambda a(1-t))=2/(\lambda a(1-t))-1/\lambda , & \textrm{as }
        \lambda \rightarrow \infty.
\end{array}%
\right. \nonumber
\end{align}
\end{theorem}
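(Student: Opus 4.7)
The plan is to apply Corollary~\ref{cor:solution}, which already solves (\ref{Odds}) for arbitrary monotone $\alpha$, directly to the linear choice $\alpha(t)=\theta+(t+1)(1-\theta)a/2$, using the relation $\theta(1+\lambda)=1$ to collapse the resulting algebra, and then to read off the three regimes by substituting $\lambda=0$, $\lambda=1$ and $\lambda\to\infty$ in the simplified expression. Thus the theorem is essentially a specialisation of an already-solved functional equation.

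The first step is to record the identities that make the substitution tractable. Direct computation gives
\[
\alpha(t)+\alpha(-t)=2\theta+(1-\theta)a,\qquad 1-\alpha(-t)=\frac{(1-\theta)(at-a+2)}{2},
\]
while the relations $(\lambda+1)\theta=1$ and $(\lambda+1)(1-\theta)=\lambda$ yield the crucial simplification
\[
(\lambda+1)\alpha(t)-1=\frac{\lambda a(t+1)}{2}.
\]
These identities reduce the numerator of Corollary~\ref{cor:solution} to $\lambda(1-\theta)a(t+1)(at-a+2)/4=\lambda^{2}(1+\lambda)^{-1}a(t+1)(at-a+2)/4$.

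For the denominator it is convenient to set $u:=(1-\theta)a/2$, so that $\alpha(\pm t)=\theta+u(1\pm t)$ and hence $\alpha(t)\alpha(-t)=(\theta+u)^{2}-u^{2}t^{2}$. Inserting $\theta=1/(1+\lambda)$ and $u=\lambda a/(2(1+\lambda))$ and expanding the three contributions to $\alpha(t)+\alpha(-t)-1+(\lambda^{2}-1)\alpha(t)\alpha(-t)$, the $t$-independent pieces largely cancel and what survives is $\lambda^{2}a[4+(\lambda-1)a(1-t^{2})]/(4(1+\lambda))$. The common factor $\lambda^{2}/(1+\lambda)$ then divides out against the numerator, leaving
\[
H(t)=\frac{(1+t)(at-a+2)}{4+a(\lambda-1)(1-t^{2})},
\]
which, on multiplying top and bottom by $a/4$, is exactly (\ref{Soln Odds}). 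Each limiting form is immediate from this reduced expression: at $\lambda=0$ the denominator becomes $4-a(1-t^{2})=4-a+at^{2}$; at $\lambda=1$ it reduces to $4$, giving $H(t)=(1+t)(at-a+2)/4$ and recovering Lemma~\ref{lem:main}; and as $\lambda\to\infty$ the leading contribution $\lambda a(1-t^{2})$ dominates, yielding $H(t)\sim(at-a+2)/[\lambda a(1-t)]$, which the identity $at-a+2=2-a(1-t)$ rewrites as $2/[\lambda a(1-t)]-1/\lambda$.

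The main obstacle is purely organisational: the three contributions to the denominator of Corollary~\ref{cor:solution} carry different powers of $(1+\lambda)$, so one must verify carefully that the $t$-independent terms align and that the cross-term in $\lambda^{2}a^{2}t^{2}$ combines with the $\lambda^{2}a^{2}$ constant to leave only an $(1-t^{2})$ piece in the end. Once this bookkeeping is handled, no further idea is needed beyond the Corollary itself.
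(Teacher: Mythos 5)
Your proposal is correct and follows essentially the same route as the paper: both specialise Corollary~\ref{cor:solution} to the linear $\alpha$, exploit $\theta=1/(1+\lambda)$ to collapse the numerator to $\lambda a(t+1)/2$ times $(1-\theta)(at-a+2)/2$ and the denominator to a multiple of $4+(\lambda-1)a(1-t^{2})$, and then read off the three $\lambda$-regimes. Your value $(1+t)(at-a+2)/4$ at $\lambda=1$ is the right one (the extra factor $a$ in the theorem's displayed $\lambda=1$ case is a typo, as the subsequent Remark confirms), and your bookkeeping via $u=(1-\theta)a/2$ is just a notational variant of the paper's $\alpha(t)=B+At$.
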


\begin{proof}
In the general state of Nature case (\ref{Odds}), specializing to the linear case and repeating the
argument in Section~\ref{sec:Introduction} does indeed give
\[
\alpha (t)=\theta +(t+1)(1-\theta )a/2,
\]
as then $\alpha (-1)=\theta $ and, for $a=1$, $\alpha (1)=\theta +(1-\theta )=1$. Noting that
\[
1-\alpha (-t) =1-\theta -\frac{1}{2}(1-t)(1-\theta )a=\frac{1}{2}(1-\theta)(at-a+2)
\]
and
\[
\alpha (t)+\alpha (-t)-1 = 2\theta +(1-\theta )a-1=\theta +(1-\theta )(a-1),
\]
and, writing $\alpha (t)=B+At$ for convenience, from Corollary~\ref{cor:solution} we derive
\begin{eqnarray*}
H(t) &=&\frac{((\lambda +1)(B+At)-1)(1-\theta )(at-a+2)/2}{\theta +(1-\theta
)(a-1)+(\lambda ^{2}-1)(B^{2}-A^{2}t^{2})} \\
&=&\frac{((B+At)/\theta -1)\lambda (at-a+2)/2}{1+\lambda (a-1)+(\lambda
^{2}-1)(B^{2}-A^{2}t^{2})/\theta } \\
&=&\frac{\lambda ^{2}(1+t)(at-a+2)a/4}{1+\lambda (a-1)+(\lambda
-1)(B^{2}-A^{2}t^{2})/\theta ^{2}},
\end{eqnarray*}%
as $1+\lambda =1/\theta $ (as above), $B/\theta =1+\lambda a/2$ and $A/\theta =\lambda a/2$.

Finally, writing $\mu =1/\lambda$, we obtain
\begin{equation*}
H(t)=\frac{(1+t)(at-a+2)a/4}{\mu ^{2}+\mu (a-1)+(1/\mu -1)((\mu
+a/2)^{2}-a^{2}t^{2}/4)},
\end{equation*}%
and here the denominator is
\begin{equation*}
D(t):=-a^{2}/4+a^{2}t^{2}/4+(a+\lambda a^{2}/4-\lambda a^{2}t^{2}/4),
\end{equation*}%
so that $D(t)\sim \lambda a^{2}(1-t^{2})/4$ as $\lambda \rightarrow \infty$ and
\[
\lim_{\lambda \rightarrow 0}D(t;\mu ) =\lim_{\lambda \rightarrow
0}(4a-a^{2}+a^{2}t^{2})/4.
\]
\end{proof}

\begin{remark}
When $\lambda =1$ we retrieve from (\ref{Soln Odds}) the formula $(1+t)(at-a+2)/4$ as in (\ref{Soln
Lin}).
\end{remark}

\section{An alternative proof of Lemma~\ref{lem:main}}

In this section we give, as an alternative to Theorem~\ref{thm:tail-solution}, a direct proof of
Lemma~\ref{lem:main}, as it is of independent interest.

Consider the following equation:
\begin{equation}
\frac{1-H_{a}(t)}{1-H_{a}(t)+H_{a}(-t)}=\frac{2+(1+t)a}{4}.
\label{eqn:functional_equation}
\end{equation}%
We are to show that $H_{a}(t)=(1+t)(2+at-a)/4$. Introduce two functions as follows:
\begin{align*}
f_{a}(t)& :=H_{a}(t)-H_{a}(-t), \\
g_{a}(t)& :=H_{a}(t)+H_{a}(-t).
\end{align*}%
Then \eqref{eqn:functional_equation} can be re-written as
\begin{equation}
H_{a}(t)=1-\frac{2+(1+t)a}{4}\left( 1-f_{a}(t)\right) .
\label{eqn:functional_equation1}
\end{equation}%
Similarly, replacing $t$ by $-t$ in \eqref{eqn:functional_equation} leads to
\begin{equation}
H_{a}(-t)=1-\frac{2+(1-t)a}{4}\left( 1+f_{a}(t)\right) .
\label{eqn:functional_equation2}
\end{equation}%
Subtraction of \eqref{eqn:functional_equation2} from %
\eqref{eqn:functional_equation1} gives
\begin{equation*}
f_{a}(t)=\frac{2+(1-t)a}{4}\left( 1+f_{a}(t)\right) -\frac{2+(1+t)a}{4}%
\left( 1-f_{a}(t)\right) =\frac{2+a}{2}f_{a}(t)-\frac{a}{2}t,
\end{equation*}%
that is $f_{a}(t)=t$. On the other hand, summation of %
\eqref{eqn:functional_equation1} and \eqref{eqn:functional_equation2} leads
to
\begin{equation*}
g_{a}(t)=\frac{at}{2}f_{a}(t)+\frac{2-a}{2}=\frac{2-a+at^{2}}{2}.
\end{equation*}%
Therefore, we obtain
\begin{equation*}
H_{a}(t)=\frac{f_{a}(t)+g_{a}(t)}{2}=\frac{(1+t)(2+at-a)}{4},
\end{equation*}%
as desired. $\square $


\begin{thebibliography}{9}
\bibitem[Acz\'{e}l and Dhombres(1989)]{AczD} J. Acz\'{e}l and J. Dhombres (1989), \emph{Functional
    Equations in Several Variables.} Encycl.\ Math.\ App.\ \textbf{31}, Cambridge University Press.

\bibitem[Alpern and Chen(2017a)]{AlpC2} S. Alpern and B. Chen (2017a), The importance of voting
    order for jury decisions by sequential majority voting. \emph{European Journal of Operational Research}
    \textbf{258}, 1072--1081.

\bibitem[Alpern and Chen(2017b)]{AlpC1} S. Alpern and B. Chen (2017b), Who should cast the casting
    vote? Using sequential    voting    to    amalgamate information, \emph{Theory and Decision}
    \textbf{83}, 259--282.

\bibitem[Bingham, Goldie and Teugels(1989)]{BinGT} N.H. Bingham, C.M. Goldie, and J.L.
    Teugels (1989), \emph{Regular Variation} (2nd Ed.), Cambridge University Press.

\bibitem[Condorcet(1785)]{Con} J.A.N.C. Marquis de Condorcet (1785), \emph{Essai sur
    l'application de l'analyse \`{a} la probabilit\'{e} des d\'{e}cisions rendues \`{a} la
    pluralit\'{e} des voix}, Imprim. Royale Paris. %Chelsea, 1972.

\bibitem[El-Hady et al.(2016)]{ElsBFN} E. El-Hady, J.
    Brzd\c{e}k, W. F\"{o}rg-Rob, and H. Nassar (2016), Remarks on
    solutions of a functional equation arising from an asymmetric switch. \emph{Contributions in
    Mathematics and Engineering}, 153--163, Springer, Cham.

\bibitem[Kahlig and Matkowski(2014)]{KahM} P. Kahlig and J. Matkowski (2014), On a functional
    equation related to competition. \emph{Aequationes Math.} \textbf{87}, 301--308.

\bibitem[Ostaszewski(2017)]{Ost} A.J. Ostaszewski (2017), Homomorphisms from functional equations
    in probability. \emph{Developments in Functional Equations and Related Topics}, 171--213, Springer
    Optim.\ Appl., 124, Springer, Cham.

\bibitem[Todhunter(2016)]{Tod} I. Todhunter (2016), \emph{A History of the Mathematical Theory of
    Probability: from the Time of Pascal to that of Laplace}. Cambridge University Press,
    Ch.~XVII, Condorcet: Trial by Jury, 388--392.
\end{thebibliography}
\end{document}